\theoremstyle{definition}
\newtheorem{definition}{Definition}
\theoremstyle{theorem}
\newtheorem{proposition}[definition]{Proposition}
\newtheorem{lemma}[definition]{Lemma}
\newtheorem{theorem}[definition]{Theorem}
\numberwithin{equation}{section}
\theoremstyle{remark}
\newtheorem{remark}[definition]{Remark}
\newtheorem{example}[definition]{Example}
\def\PP{\mathsf P}
\def\EE{\mathsf E}
\def\QQ{\mathsf Q}
\def\BB{\mathcal B}
\def\FF{\mathcal F}
\def\GG{\mathcal G}
\def\Bo{\BB^{(\omega)}_q}
\def\Wl{W^{(\lambda+q)}}
\def\Wq{W^{(q)}}
\def\Wm{W^{(\lambda)}}
\def\Ho{\mathcal{H}^{(\omega)}_q}
\def\Hog{\mathcal{H}^{(\gamma\omega)}_q}
\def\Lo{L^{(\omega)}_q}
\def\Log{L^{(\gamma \omega)}_q}
\def\Id{\cdot}
\def\Ha{\mathcal{H}_\alpha}
\def\La{L_\alpha}
\def\AA{\mathcal A}
\begin{document}
\title{First passage upwards for state dependent-killed spectrally negative L\'evy processes}

\author{Matija Vidmar}
\address{Department of Mathematics, University of Ljubljana, Slovenia}
\email{matija.vidmar@fmf.uni-lj.si}

\begin{abstract}
For a spectrally negative L\'evy process $X$, killed according to a rate that is a function $\omega$ of its position, we complement the recent findings of \cite{zbigniew-bo} by analysing (in greater generality) the exit probability of the one-sided upwards-passage problem. When $\omega$ is strictly positive, this problem is related to the determination of the Laplace transform of the first passage time upwards for $X$ that has been time-changed by the inverse of the additive functional $\int_0^\cdot \omega(X_u)du$. In particular our findings thus shed extra light on related results concerning first passage times downwards (upwards) of continuous state branching processes (spectrally negative positive self-similar  Markov processes).
\end{abstract}

\thanks{Financial support from the Slovenian Research Agency is acknowledged (research core funding No. P1-0222).}

\keywords{Spectrally negative L\'evy processes; first passage upwards; killing; time-changes}

\subjclass[2010]{Primary: 60G51; Secondary: 60J25, 60G44} 

\maketitle

\section{Introduction}

Let $X=(X_t)_{t\in [0,\infty)}$ be a spectrally negative L\'evy process (snLp) under the probabilities $(\PP_x)_{x\in \mathbb{R}}$. This means that $X$ is a  c\`adl\`ag, real-valued process with no positive jumps and non-monotone paths, which, under $\PP_0$,  a.s. vanishes at zero and has stationary independent increments; furthermore, for each $x \in \mathbb{R}$, the law of $X$ under $\PP_x$ is that of $x+X$ under $\PP_0$. We refer to \cite{bertoin,kyprianou,doney,sato} for the general background on (the fulctuation theory of) L\'evy processes and to \cite[Chapter~VII]{bertoin} \cite[Chapter~8]{kyprianou} \cite[Chapter~9]{doney}  \cite[Section~9.46]{sato} for snLp in particular. As usual we set $\PP:=\PP_0$.  For $c\in \mathbb{R}$ denote next by $\tau_c^+:=\inf\{t\in (0,\infty):X_t>c\}$ the first hitting time of the set $(c,\infty)$ by the process $X$. Further, let $q\in [0,\infty)$ and let  $e_q$ be an exponentially with mean $q^{-1}$ distributed random variable ($e_0=\infty$ a.s.) independent of $X$ (under $\PP_x$ for all $x\in \mathbb{R}$). Finally, let $\omega:\mathbb{R}\to [0,\infty)$ be Borel measurable and locally bounded.  Then, for real $x\leq c$, we will be interested in the quantity\footnote{Throughout we will write $\QQ[W]$ for $\EE_\QQ[W]$, $\QQ[W;A]$ for $\EE_\QQ[W\mathbbm{1}_A]$, $\QQ[W\vert A]$ for $\EE_\QQ[W\vert A]$ ($A$ an event) and $\QQ[W\vert \mathcal{G}]$ for $\EE_\QQ[W\vert \mathcal{G}]$ ($\GG$ a sub-$\sigma$-field).} 
\begin{equation}\label{what-we-are-after}
\Bo(x,c):=\PP_x \left[\exp\left\{-\int_0^{\tau_c^+}\omega(X_u)du\right\};\tau_c^+<e_q\right].
\end{equation}
This may be interpreted as the ultimate passage  probability of $X$, killed at $e_q$, over the level $c$, when started at $x$, under ``$\omega$-killing'', i.e. when $X$ is killed (in addition to being killed at the time $e_q$) according to a rate that depends on the position of $X$ and that is  given by the function $\omega$. Of course $\Bo(x,c)=\mathcal{B}^{(\omega+q)}_0(x,c)$, but it will be convenient to keep the independent exponential killing separate.

Assume now that $\omega$ is strictly positive everywhere. Our main motivation for the interest in \eqref{what-we-are-after} comes from its involvement in the solution of the first passage problem upwards for the process that we will denote by $Y=(Y_s)_{s\in [0,\infty)}$, and that is defined as follows. Setting $\zeta:=\int_0^{e_q} \omega(X_u)du$ (see \cite{doring} for conditions on the finiteness/divergence of this integral in the case $q=0$, i.e. $e_q=\infty$), then
\begin{equation}\label{eq:Y}
\text{for $s\geq\zeta$, $Y_s=\partial$, where $\partial$ is some ``cemetery'' state, whilst for $s\in [0,\zeta)$, $Y_s=X_{\rho_s}$,}
\end{equation}
with $$\rho_s:=\inf \left\{t\in [0,\infty):\int_0^t\omega(X_u) du>s\right\}\text{ for }s\in [0,\infty).$$ Notice  that  $\rho$ is continuous (because $\omega$ is strictly positive, and hence $\int_0^\cdot\omega(X_u) du$ strictly increasing) and it is strictly increasing where it is finite  (because $\omega$ is locally bounded, and hence  $\int_0^\cdot \omega(X_u)du$ continuous). Thus the paths of $Y$ up to $\zeta$ are the same as the paths of $X$ up to $e_q$ -- modulo the random time change $\rho$. Also, if $\FF=(\FF_t)_{t\in [0,\infty)}$ is any filtration relative to which $X$ is adapted and has independent increments, with $e_q$ independent of $\FF_\infty$, then thanks to the strong Markov property of $X$ and the memoryless property of the exponential distribution, the process $Y$ is Markovian with state space $(\mathbb{R},\mathcal{B}_\mathbb{R})$ and life-time $\zeta$ under the probabilities $(\PP_y)_{y\in \mathbb{R}}$ and in the filtration $\GG=(\GG_s)_{s\in [0,\infty)}:=(\FF_{\rho_s}\lor \sigma(\{\{\rho_u<e_q\}:u\in [0,s]\}))_{s\in [0,\infty)}$, in the precise sense that it is $\GG$-adapted and that for any Borel measurable $h:\mathbb{R}\to [0,\infty]$, and any $y\in \mathbb{R}$, $t\in [0,\infty)$, a.s.-$\PP_y$, $\PP_y[h(Y_{t+s})\mathbbm{1}_{\{t+s<\zeta\}}\vert \GG_t]=\PP_{Y_t}[h(Y_s);s<\zeta]\mathbbm{1}_{\{t<\zeta\}}$. (Of course if in addition one has a $\PP_\partial$ such that $Y_t=\partial$ for all $t\in [0,\infty)$ a.s.-$\PP_\partial$, then as a consequence $Y$ is also simply Markovian with state space $(\mathbb{R}\cup \{\partial\},\sigma(\mathcal{B}_\mathbb{R}\cup \{\{\partial\}\}))$ and infinite life-time.)  

By way of example, when $\omega=\exp$ and $\partial=-\infty$, then, under the probabilities $(\PP_{\log x})_{x\in (0,\infty)}$, $S:=\exp(Y)$ is a spectrally negative positive self-similar Markov process (pssMp) absorbed at the origin, with index of self-similarity $1$, associated to $X$ via the Lamperti transform for pssMp \cite[Theorem 13.1]{kyprianou}. And, for $c\in (0,\infty)$, on $\{\tau_{\log c}^+<e_q\}$, $\int_0^{\tau_{\log c}^+}\exp(X_u)du$ is the first time that $S$ hits the set $(c,\infty)$, the latter time being $=\infty$ on the complement of $\{\tau_{\log c}^+<e_q\}$. See Example~\ref{example:two}. Similarly, for $c\in (0,\infty)$, with $\omega(x)=\frac{1}{\vert x\vert}$ for $x\in (-\infty,-c]$ and again $\partial=-\infty$, $-Y^{\tau_{-c}^+}$ becomes, under the probabilities $(\PP_{-x})_{x\in [0,\infty)}$, a continuous state branching process (csbp) $B$ stopped on hitting the set $[0,c)$, where $B$ is the csbp associated to $-X$ under the Lamperti transform for csbp \cite[Theorem~12.2]{kyprianou}.\footnote{We are forced to stop at $\tau_{-c}^+$ in order to remain in the setting of a locally bounded $\omega$, which is an assumption that remains in force throughout this paper.} And, on $\{\tau_{-c}^+<e_q\}$, $\int_0^{\tau_{-c}^+}\frac{du}{\vert X_u\vert}$ is the first time $B$ hits $[0,c)$, the latter time being $=\infty$ on the complement of $\{\tau_{-c}^+<e_q\}$. See Example~\ref{example:csbp}.

More generally, denote for $d\in \mathbb{R}$, by $T_d^+:=\inf\{s\in (0,\infty):Y_s\in (d,\infty)\}$ the first hitting time of the set $(d,\infty)$ by the process $Y$.  Then, for $\gamma\in [0,\infty)$, and real $y\leq d$, under $\PP_y$, the Laplace transform of $T_d^+=\int_0^{\tau_{d}^+}\omega(X_u)du$ on $\{T_d^+<\zeta\}=\{\tau_{ d}^+<e_q\}$, at the point $\gamma$, is given simply by 
\begin{equation}\label{eq:pssMp-Laplace}
\PP_y[e^{-\gamma T_d^+};T_d^+<\zeta]=\mathcal{B}^{(\gamma\omega)}_q(y,d).
\end{equation}
Moreover, knowledge of this expression automatically furnishes also the joint Laplace transform of $\tau_{ d}^+$ and $T_d^+$: if further $p\in [0,\infty)$, then $\PP_y[e^{-\gamma T_d^+-p\tau_{d}^+};T_d^+<\zeta]=\mathcal{B}^{(\omega)}_{q+p}(y, d)$. 

Literature-wise, fluctuation results for the ``$\omega$-killed'' snLp $X$ have been the subject of the substantial recent study of \cite{zbigniew-bo} to which the reader is referred for a further review of existing and related results as well as extra  motivation for considering such processes. 

Our contribution is only a small complement to the findings of \cite{zbigniew-bo}, but still one that seems to deserve recording. To be precise, \cite{zbigniew-bo} provides information on the one-sided upwards passage problem when $\omega$ is constant on $(-\infty,0]$ (see \cite[Section~2.4]{zbigniew-bo}); we will extend this  to a far more general class of functions $\omega$. In this class, the solution to \eqref{what-we-are-after} will be given in terms of a function $\Ho$ that will be found to solve  (uniquely)  a natural convolution equation on the real line involving the $q$-scale function of $X$ (Theorem~\ref{theorem}). In contrast to the two-sided exit problem, where the pertinent convolution equation is on the nonnegative half-line \cite[Eq.~(1.2)]{zbigniew-bo}, this introduces some extra finiteness issues, making the analysis slightly more delicate. The function $\Ho$ will also be associated with a family of (local) martingales involving the process $Y$ (Proposition~\ref{proposition:martingale}).

To avoid unnecessary repetition we turn to the results and their proofs presently in  Section~\ref{section:results}, after briefly introducing some necessary further notation and recalling some known facts in Section~\ref{section:notation}.  Section~\ref{illustration} concludes by illustrating the findings in the context of determining the optimal level at which to sell an asset whose price process is given by the exponential of the process $Y$ from \eqref{eq:Y}.

\section{Further notation and some preliminaries}\label{section:notation}
We denote by $\psi$ the Laplace exponent of $X$, $\psi(c):=\log \PP[e^{cX_1}]$ for $c\in [0,\infty)$; and by $\Phi$ its right-continuous inverse, $\Phi(p):=\inf\{c\in [0,\infty):\psi(c)>p\}$ for $p\in [0,\infty)$; $\psi$ is strictly convex and continuous, $\lim_\infty\psi=\infty$ and $\Phi(0)$ is the largest zero of $\psi$. For real $x\leq c$ recall the classical identity \cite[Eq.~(3.15)]{kyprianou}
\begin{equation}\label{eq:classical}
\PP_x[e^{-q\tau_c^+};\tau_c^+<\infty]=e^{-\Phi(q)(c-x)}.
\end{equation}

Further, for $\lambda\in [0,\infty)$, $\Wm:\mathbb{R}\to [0,\infty)$ will be the $\lambda$-scale function of $X$, characterized by being continuous on $[0,\infty)$, vanishing on $(-\infty,0)$, and having Laplace transform 
\begin{equation}\label{eq:laplace}
\int_0^\infty e^{-\theta x}\Wm(x)dx=\frac{1}{\psi(\theta)-\lambda},\quad \theta\in (\Phi(\lambda),\infty).
\end{equation}
In particular we set $W^{(0)}=:W$. The reader is referred to \cite{kkr} for further background on scale functions of snLp; we note explicitly only the asymptotic behavior \cite[Eq.~(33), Lemmas~2.3 and~3.3]{kkr}
\begin{equation}\label{eq:aux}
e^{-\Phi(\lambda)x}W^{(\lambda)}(x)=W_{\Phi(\lambda)}(x)\uparrow \frac{1}{\psi'(\Phi(\lambda)+)}\text{ as }x\uparrow\infty,\quad \lambda\in [0,\infty),
\end{equation}
that we shall use repeatedly in what follows (here $1/0:=\infty$ when $\lambda=\Phi(0)=\psi'(0+)=0$, and otherwise $\psi'(\Phi(\lambda)+)\in (0,\infty)$; $W_{\Phi(\lambda)}$ is the scale function of an Esscher transformed process -- its precise character is unimportant, what matters is only the monotone convergence).  

Convolution on the real line will be denoted by a $\star$: for Borel measurable $f,g:\mathbb{R}\to \mathbb{R}$, $$(f\star g)(x):=\int_{-\infty}^\infty f(y)g(x-y)dy,\quad x\in \mathbb{R},$$ whenever the Lebesgue integral is well-defined. 

Finally, it will be convenient to introduce the following concepts.
\begin{definition}
For a function $f:\mathbb{R}\to \mathbb{R}$, we will (i) say that it has a \textbf{bounded left tail} (resp. \textbf{left tail that is bounded below away from zero}) if $f$ is bounded (resp. bounded below away from zero) on $(-\infty,x_0]$ for some $x_0\in \mathbb{R}$;   (ii) for further $\alpha\in [0,\infty)$, say that it has a \textbf{left tail that is $\alpha$-subexponential} provided that for some $x_0\in \mathbb{R}$, some $\gamma<\infty$, and then all $x\in (-\infty,x_0]$, one has $\vert f(x)\vert\leq  \gamma e^{\alpha x}$; and (iii) say simply that it has a \textbf{subexponential left tail} if, for some $\alpha>0$, it has a left tail that is $\alpha$-subexponential.
\end{definition}

\section{Results and their proofs}\label{section:results}
Here is now the main result of this note.
\begin{theorem}\label{theorem}
There exists a unique function $\Ho:\mathbb{R}\to (0,\infty)$ satisfying (the arbitrary normalization condition) $\Ho(0)=1$ such that 
\begin{equation}\label{eq:scale}
\Bo(x,c)=\frac{\Ho(x)}{\Ho(c)}\text{ for all real }x\leq c.
\end{equation}
The function $\Ho$ enjoys the following properties.
\begin{enumerate}[(I)]
\item\label{I} It is  nondecreasing (hence locally bounded), continuous, and it is strictly increasing provided $\omega>0$.
\item\label{II} For each $c\in \mathbb{R}$ the following holds: $\omega_1(\cdot \land c)=\omega_2(\cdot\land c)$ implies $\mathcal{H}^{(\omega_1)}(\cdot\land c)=\alpha \mathcal{H}^{(\omega_2)}(\cdot \land c)$ for some $\alpha\in (0,\infty)$; this  $\alpha$ being $1$ if $c\geq 0$.\footnote{$\omega_1(\cdot\land c)$ means the function $(\mathbb{R}\ni x\mapsto \omega_1(x\land c))$. More generally, throughout this text, given an expression $\mathcal{R}(x)$ defined for $x\in R$ we will write $\mathcal{R}(\cdot)$ for the function $(R\ni x\mapsto \mathcal{R}(x))$.}
\item\label{III} If  $\omega_1,\omega_2:\mathbb{R}\to [0,\infty)$ are both locally bounded and Borel measurable with $\omega_1\leq\omega_2$ (resp. $\omega_1<\omega_2$), then  $\mathcal{H}^{(\omega_1)}_q\leq \mathcal{H}^{(\omega_2)}_q$ (resp. $\mathcal{H}^{(\omega_1)}_q<\mathcal{H}^{(\omega_2)}_q$) on $(0,\infty)$ and $\mathcal{H}^{(\omega_1)}_q\geq  \mathcal{H}^{(\omega_2)}_q$ (resp. $\mathcal{H}^{(\omega_1)}_q>\mathcal{H}^{(\omega_2)}_q$) on $(-\infty,0)$ (of course $\mathcal{H}^{(\omega_1)}_q(0)=1=\mathcal{H}^{(\omega_2)}_q(0)$).
\item\label{IV} For real $x\leq c$, $\Ho(x)\leq \Ho(c)e^{-\Phi(q)(c-x)}$, in particular $\Ho(x)\leq e^{\Phi(q)x}$ for all $x\in (-\infty,0]$, so that $\Ho$ has a left tail that is $\Phi(q)$-subexponential.
\end{enumerate}
Furthermore, if $(\omega e^{\Phi(q+p)\cdot})\star \Wq$ is finite-valued for all $p\in (0,\infty)$, then for some unique $\Lo\in [0,1]$, $\Ho$ satisfies the convolution equation
\begin{equation}\label{eq:renewal-zussamen}
\Ho=\Lo e^{\Phi(q)\cdot}+(\omega \Ho)\star \Wq.
\end{equation}
More specifically:
\begin{enumerate}[(i)]
\item\label{conv:i} If moreover $\omega$ has a left tail that is bounded and bounded below away from zero, then $\Ho$ satisfies the (homogeneous) convolution equation 
\begin{equation}\label{eq:renewal}
\Ho=(\omega\Ho)\star \Wq.
\end{equation}
\item\label{conv:ii} 
If even $(\omega e^{\Phi(q)\cdot})\star \Wq$ is finite-valued, in particular if $\omega$ has a subexponential left tail, then $\Ho$ is the unique locally bounded Borel measurable function $H:\mathbb{R}\to \mathbb{R}$ admitting a left tail that is $\Phi(q)$-subexponential and satisfying the (inhomogeneous)  convolution equation 
\begin{equation}\label{eq:renewal:bis}
H=\Lo e^{\Phi(q)\cdot}+(\omega H)\star \Wq,
\end{equation}
where \begin{equation}\label{eq:limit} \Lo=\lim_{x\to-\infty}\Ho(x)e^{-\Phi(q)x}=\lim_{x\to-\infty}\PP_x\left[\exp\left(-\int_0^{\tau_0^+}\omega(X_s)ds\right)\bigg\vert \tau_0^+<e_q\right]\in (0,1].
\end{equation}This function is given as $\Ho\!\!\!=\uparrow\!\!\!\text{--}\!\lim_{n\to \infty}H_n$, where $H_0:=\Lo e^{\Phi(q)\cdot}$ and recursively $H_{n+1}:=\Lo e^{\Phi(q)\cdot}+(\omega H_n)\star \Wq$ for $n\in  \mathbb{N}_0$. 
\end{enumerate}
\end{theorem}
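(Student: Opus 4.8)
The plan is to build everything on the multiplicative structure of $\Bo$. First I would record the cocycle identity $\Bo(x,c)=\Bo(x,y)\Bo(y,c)$ for $x\le y\le c$: since $X$ has no positive jumps it creeps upwards, so $X_{\tau_y^+}=y$ on $\{\tau_y^+<\infty\}$, and the strong Markov property at $\tau_y^+$ together with $\int_0^{\tau_c^+}=\int_0^{\tau_y^+}+\int_{\tau_y^+}^{\tau_c^+}$ and the memorylessness of $e_q$ (absorbing the exponential killing so that $\Bo(x,c)=\PP_x[e^{-\int_0^{\tau_c^+}(\omega+q)(X_u)du};\tau_c^+<\infty]$) split the expectation into the two claimed factors. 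A short support argument (the process can reach $c$ from $x$ within bounded time while staying in a compact strip, on which $\omega+q$ is bounded) shows $\Bo(x,c)\in(0,1]$. Defining $\Ho(x):=\Bo(x,0)$ for $x\le0$ and $\Ho(x):=1/\Bo(0,x)$ for $x\ge0$, the cocycle identity yields \eqref{eq:scale} in all three position cases and forces uniqueness once $\Ho(0)=1$ is imposed. Properties \ref{I}--\ref{IV} are then essentially immediate: monotonicity and strictness follow from $\Bo(x,c)\le1$, resp.\ $<1$ when $\omega>0$ (as $\tau_c^+>0$); \ref{II} is the locality of $\int_0^{\tau_{c'}^+}\omega(X_u)du$ on $\omega|_{(-\infty,c]}$ for $x\le c'\le c$; \ref{III} is monotonicity of $\exp(-\int\omega)$ in $\omega$; and \ref{IV} is $\Bo(x,c)\le\PP_x[e^{-q\tau_c^+};\tau_c^+<\infty]=e^{-\Phi(q)(c-x)}$ from \eqref{eq:classical}. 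Continuity I would obtain from the a.s.\ continuity of $c\mapsto\tau_c^+$ (creeping) and dominated convergence, or a posteriori from the convolution equation.

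The heart is the convolution equation. Perturbing the multiplicative functional via $1-e^{-\int_0^{\tau_c^+}\omega}=\int_0^{\tau_c^+}\omega(X_s)e^{-\int_s^{\tau_c^+}\omega}ds$, discounting by $q$, and applying the strong Markov property at time $s$ gives the Feynman--Kac/Dynkin identity
\[
\BB^{(0)}_q(x,c)-\Bo(x,c)=\int_{-\infty}^c\omega(y)\,\Bo(y,c)\,u^q(x,y)\,dy,
\]
where $u^q(x,\cdot)$ is the $q$-resolvent density of $X$ killed at $\tau_c^+$. Letting $a\to-\infty$ in the two-sided formula $u^{(q)}_{(a,c)}(x,y)=\Wq(x-a)\Wq(c-y)/\Wq(c-a)-\Wq(x-y)$ and using the monotone convergence \eqref{eq:aux} yields $u^q(x,y)=e^{-\Phi(q)(c-x)}\Wq(c-y)-\Wq(x-y)$. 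Substituting $\BB^{(0)}_q(x,c)=e^{-\Phi(q)(c-x)}$ and $\Bo(\cdot,c)=\Ho(\cdot)/\Ho(c)$ and multiplying by $\Ho(c)$ turns the display into the master identity $\Ho(x)-(\omega\Ho)\star\Wq(x)=e^{-\Phi(q)(c-x)}[\Ho(c)-(\omega\Ho)\star\Wq(c)]$ whenever the convolutions are finite; since the right side is a constant times $e^{\Phi(q)x}$, this is exactly \eqref{eq:renewal-zussamen} with $\Lo:=e^{-\Phi(q)c}[\Ho(c)-(\omega\Ho)\star\Wq(c)]$ independent of $c$. Tilting by $\Phi(q)$, i.e.\ writing $h:=\Ho e^{-\Phi(q)\cdot}$, recasts the equation as the renewal equation $h=\Lo+(\omega h)\star W_{\Phi(q)}$ with the bounded kernel $W_{\Phi(q)}=e^{-\Phi(q)\cdot}\Wq$ of \eqref{eq:aux}; this shows $h\ge\Lo$, identifies $\Lo$ with the limit in \eqref{eq:limit} (the $\Phi(q)$-tilt being the conditioning $\{\tau_0^+<e_q\}$ as $x\to-\infty$), and gives $\Lo\in[0,1]$.

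The main obstacle is the finiteness of $(\omega\Ho)\star\Wq$, since the master identity only delivers a priori the finiteness of the combined kernel $\int\omega\Ho\,u^q$, not of the individual scale-function integral. Here I would split according to $\Lo$. When $\Lo=0$---which I would show holds whenever $\omega$ has a left tail bounded below away from $0$, by the comparison $\Ho(x)\le\PP_x[e^{-(q+\epsilon)\tau_0^+};\tau_0^+<\infty]=e^{\Phi(q+\epsilon)x}$ forcing $h(x)\to0$---equation \eqref{eq:renewal-zussamen} collapses to the homogeneous \eqref{eq:renewal}, whose right side equals the finite $\Ho$; this yields case \ref{conv:i}. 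When $\Lo>0$ one has $\Ho\sim\Lo e^{\Phi(q)\cdot}$ at $-\infty$, so finiteness of $(\omega\Ho)\star\Wq$ is equivalent to finiteness of $(\omega e^{\Phi(q)\cdot})\star\Wq$, which is exactly the hypothesis of case \ref{conv:ii} (and is implied by a subexponential left tail via \eqref{eq:aux}); the intermediate weak hypothesis $(\omega e^{\Phi(q+p)\cdot})\star\Wq<\infty$ for all $p>0$ is what is needed to push the $a\to-\infty$ limit and the asymptotic estimates through in the borderline regime.

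For the uniqueness and iteration in \ref{conv:ii} I would argue on the tilted equation: any two locally bounded solutions of \eqref{eq:renewal:bis} with $\Phi(q)$-subexponential tail differ by $D$ solving $D=(\omega D)\star\Wq$, and iterating this renewal operator (whose $\Phi(q)$-tilt has controllable mass by \eqref{eq:aux} and the hypothesis) forces $D\equiv0$. Finally the monotone scheme $H_0=\Lo e^{\Phi(q)\cdot}$, $H_{n+1}=\Lo e^{\Phi(q)\cdot}+(\omega H_n)\star\Wq$ increases (the operator being positivity-preserving) and is dominated by $\Ho$, because $h\ge\Lo$ gives $H_0\le\Ho$ and $\Ho$ itself solves the equation; hence $H_n\uparrow H_\infty\le\Ho$, the limit $H_\infty$ solves \eqref{eq:renewal:bis} with a $\Phi(q)$-subexponential tail, and uniqueness identifies $H_\infty=\Ho$.
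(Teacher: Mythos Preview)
Your route via the direct Feynman--Kac perturbation
\[
\BB^{(0)}_q(x,c)-\Bo(x,c)=\int_{-\infty}^c\omega(y)\,\Bo(y,c)\,u^q(x,y)\,dy
\]
with the one-sided resolvent $u^q(x,y)=e^{-\Phi(q)(c-x)}\Wq(c-y)-\Wq(x-y)$ is genuinely different from the paper's, and for cases \ref{conv:i} and \ref{conv:ii} it is cleaner. The paper instead bounds $\omega\le\lambda$, uses a marked Poisson process to obtain an identity involving $\Wl$, lets $c\uparrow\infty$, then convolutes with $\lambda\Wq$ and uses the relations $e^{\Phi(\lambda+q)\cdot}=\lambda e^{\Phi(\lambda+q)\cdot}\star\Wq$ and $\Wl=\Wq+\lambda\Wl\star\Wq$ to reduce to \eqref{eq:renewal}. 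Your argument reaches the master identity $\Ho-(\omega\Ho)\star\Wq=\Lo e^{\Phi(q)\cdot}$ in one step, and because the Feynman--Kac identity does not require $\omega$ bounded, you avoid the $\omega\land n$ truncation altogether. In case \ref{conv:i} you also get $\Lo=0$ for free from the comparison $\Ho\le e^{\Phi(q+\epsilon)\cdot}$, which is a nice observation the paper does not isolate.

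There is, however, a genuine gap in the intermediate regime (the bare statement \eqref{eq:renewal-zussamen} under only $(\omega e^{\Phi(q+p)\cdot})\star\Wq<\infty$ for all $p>0$). Your master identity requires splitting the finite integral $\int\omega\Ho\,u^q$ into the two pieces $e^{-\Phi(q)(c-x)}(\omega\Ho)\star\Wq(c)$ and $(\omega\Ho)\star\Wq(x)$, and this is only legitimate when $(\omega\Ho)\star\Wq$ is finite. Under the intermediate hypothesis this is not guaranteed: Example~\ref{example:csbp} ($\omega(x)=\gamma/\vert x\vert$ on the left) satisfies neither \ref{conv:i} nor \ref{conv:ii}, and there $(\omega e^{\Phi(q)\cdot})\star\Wq=\infty$, so the only reason $(\omega\Ho)\star\Wq$ is finite is that $\Lo=0$ and $\Ho$ decays strictly faster than $e^{\Phi(q)\cdot}$ --- a fact you have no independent access to. Your sentence ``the intermediate weak hypothesis \ldots\ is what is needed to push the $a\to-\infty$ limit and the asymptotic estimates through'' is not a proof. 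The paper closes this by applying (its version of) the argument first to $\omega+p$ with $p>0$ --- where your Feynman--Kac would also work directly, since then $\mathcal{H}^{(\omega+p)}_q\le e^{\Phi(q+p)\cdot}$ on $(-\infty,0]$ and the intermediate hypothesis gives finiteness --- and then letting $p\downarrow 0$. That last limit is delicate: one must show that $L_x:=\lim_{p\downarrow 0}e^{-\Phi(q)x}p(\mathcal{H}^{(\omega+p)}_q\star\Wq)(x)$ is independent of $x$, and the paper splits into $\psi'(\Phi(q)+)>0$ (easy via \eqref{eq:aux}) and $\psi'(\Phi(q)+)=0$ (a separate estimate is needed). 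You would need to supply this step. A smaller point: your uniqueness sketch (``iterating the renewal operator \ldots\ forces $D\equiv0$'') glosses over the two-stage structure of the paper's Lemma~\ref{lemma}\ref{lemma:i}: vanishing mass of the tilted kernel only kills $D$ on a left half-line, and a second contraction argument (choosing $s_0$ with $\psi(s_0)-q$ large) is needed to propagate to the right.
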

After some remarks and examples we turn to the proof of this theorem on p.~\pageref{proof}.
\begin{remark}
Since $\Ho=\mathcal{H}_0^{(\omega+q)}$, \eqref{eq:renewal} may be rewritten as $\Ho=((\omega+q) \Ho)\star W$. For the same reason, when $q>0$, then automatically $\Ho=((\omega+q) \Ho)\star W$.
\end{remark}
\begin{remark}
Because of \eqref{eq:aux} cases \ref{conv:i} and \ref{conv:ii} are seen to be mutually exclusive (but they are not exhaustive). Of course  $(\omega e^{\Phi(q+p)\cdot})\star \Wq$ is finite-valued for all $p\in (0,\infty)$ iff $(\omega e^{\alpha \cdot}e^{\Phi(q)\cdot})\star \Wq$ is finite-valued for all $\alpha\in (0,\infty)$, in which case, for each $\alpha\in (0,\infty)$, $\omega e^{\alpha\cdot}$ falls under the provisos of \ref{conv:ii}. For the resulting convolution equation \eqref{eq:renewal:bis} we then have suitable uniqueness of the solution as well as an explicit recursion to (at least in principle) produce it. At the same time, by bounded convergence in \eqref{eq:scale}-\eqref{what-we-are-after}, $\lim_{\alpha\downarrow 0}\mathcal{H}^{(\omega e^{\alpha\cdot})}_q=\Ho$.
\end{remark}
\begin{example}\label{example:one}
When $\omega$ is constant and equal to some $\mu\in [0,\infty)$, then \eqref{eq:classical} $\Ho=\mathcal{H}^{(\mu)}_q=e^{\Phi(q+\mu)\cdot}$, and this case falls under \ref{conv:i} or \ref{conv:ii}, according as $\mu>0$ or $\mu=0$.
\end{example}
\begin{example}\label{example:two}
When $\omega=\gamma e^{\alpha\cdot}$, with $\gamma\in [0,\infty)$ and $\alpha \in (0,\infty)$, a case that falls under \ref{conv:ii}, one obtains using \eqref{eq:laplace}
\begin{equation}\label{eq:pssMp-exit}
\mathcal{H}^{(\gamma e^{\alpha\cdot })}_q(x)= \sum_{k=0}^\infty\frac{\gamma^k e^{(\Phi(q)+\alpha k)x}}{\prod_{l=1}^k(\psi(\Phi(q)+l\alpha)-q)}\bigg/ \sum_{k=0}^\infty\frac{\gamma^k}{\prod_{l=1}^k(\psi(\Phi(q)+l\alpha)-q)},\quad x\in \mathbb{R},
\end{equation}
with the series converging to finite values. (As usual the empty product is interpreted as being equal to $1$.) Of course when $\gamma>0$, then from \eqref{eq:scale}, by spatial homogeneity, $\mathcal{H}^{(\gamma e^{\alpha\cdot})}_q(x)=\frac{\mathcal{H}^{(e^{\alpha \cdot})}_q(x+\frac{1}{\alpha}\log \gamma)}{\mathcal{H}^{(e^{\alpha \cdot})}_q(\frac{1}{\alpha}\log \gamma)}$, $x\in \mathbb{R}$. Note that this reproduces (up to trivial transformations) Patie's scale functions from the fluctuation theory of spectrally negative pssMp \cite[Section~13.7]{kyprianou}. One also identifies the limit  \eqref{eq:limit} as $L^{(\gamma e^{\alpha\cdot})}_q=\left(\sum_{k=0}^\infty\frac{\gamma^k}{\prod_{l=1}^k(\psi(\Phi(q)+l\alpha)-q)}\right)^{-1}$. 
\end{example}
\begin{remark}
Let $\gamma\in [0,\infty)$ and $\alpha \in (0,\infty)$. Suppose $\omega(x)\leq \gamma e^{\alpha x}$ for all $x\in \mathbb{R}$. Then, again via \eqref{eq:laplace}, one gets the following a priori bound on the absolute error in \ref{conv:ii} from computing only finitely many terms of the recursion for $\Ho$: $\Ho-H_n\leq \Lo\sum_{k=n+1}^\infty\frac{\gamma^k}{\prod_{l=1}^k(\psi(\Phi(q)+\alpha l)-q)}e^{(\Phi(q)+\alpha k)\cdot}\leq \sum_{k=n+1}^\infty\frac{\gamma^k}{\prod_{l=1}^k(\psi(\Phi(q)+\alpha l)-q)}e^{(\Phi(q)+\alpha k)\cdot}$ for all $n\in \mathbb{N}_0$. (In particular $\Ho\leq \Lo\sum_{k=0}^\infty\frac{\gamma^k}{\prod_{l=1}^k(\psi(\Phi(q)+\alpha l)-q)}e^{(\Phi(q)+\alpha k)\cdot}\leq \sum_{k=0}^\infty\frac{\gamma^k}{\prod_{l=1}^k(\psi(\Phi(q)+\alpha l)-q)}e^{(\Phi(q)+\alpha k)\cdot}$.)
\end{remark}
\begin{example}\label{example:csbp}
Let $c\in (0,\infty)$, $\gamma\in (0,\infty)$ and $\omega(x)=\frac{\gamma}{\vert x\vert}$ for $x\in (-\infty,-c]$. Using the result for csbp of \cite[Theorem~1]{duhalde}  we identify $\Ho$ up to a multiplicative constant; $$\Ho(x)\propto \int_{\Phi(q)}^\infty\frac{dz}{\psi(z)-q}\exp\left(xz+\int_\theta^z\frac{\gamma}{\psi(u)-q}du\right),\quad x\in (-\infty,-c],$$ where $\theta\in (\Phi(q),\infty)$ is arbitrary but fixed. Note that this $\omega$ falls neither under \ref{conv:i} nor under \ref{conv:ii}, but it does fall under \eqref{eq:renewal-zussamen}. In fact, while it is not so obvious, an easy computation shows that \eqref{eq:renewal-zussamen} is verified in this case with $\Lo=0$.
\end{example}

\begin{example}\label{example:new}
Let $n\in \mathbb{N}_{\geq 2}$, $\gamma\in [0,\infty)$, $c\in (0,\infty)$ and $\omega(x)=\frac{\gamma}{\vert x\vert^n}$ for $x\in (-\infty,-c]$. Except possibly when $q=\Phi(0)=\psi'(0+)=0$, we then automatically have, because of the asymptotic properties of $\Wq$, see \eqref{eq:aux}, that $(\omega e^{\Phi(q)\cdot})\star \Wq$ is finite-valued, and in any event we assume now that this is so. Then note, using \eqref{eq:laplace}, that, for $x\in (-\infty,-c]$, $v\in [\Phi(q),\infty)$ and for $ \alpha>0$, $\frac{d^n}{d\alpha^n}\int_0^\infty \frac{e^{(v+\alpha)(x-y)}}{(x-y)^n}\Wq(y)dy=\frac{e^{(v+\alpha)x}}{\psi(v+\alpha)-q}$, which implies $\int_0^\infty \frac{e^{(v+\alpha)(x-y)}}{\vert x-y\vert ^n}\Wq(y)dy=\int_{v+\alpha}^\infty dv_1\int_{v_1}^\infty dv_2\cdots \int_{v_{n-1}}^\infty dv_n\frac{e^{v_nx}}{\psi(v_n)-q}=\int_{v+\alpha}^\infty dv_n \frac{e^{xv_n}}{\psi(v_n)-q}\int_{v+\alpha}^{v_n}dv_{n-1}\cdots\int_{v+\alpha}^{v_2}dv_1=\int_{v+\alpha}^\infty dy\frac{e^{xy}}{\psi(y)-q}\frac{(y-v-\alpha)^{n-1}}{(n-1)!}$. Hence, letting $\alpha\downarrow 0$, by monotone convergence, $((\omega e^{v\cdot})\star \Wq)(x)=\gamma \int_0^\infty \frac{e^{v(x-y)}}{\vert x-y\vert^n}\Wq(y)dy=\gamma\int_{v}^\infty dy\frac{e^{xy}}{\psi(y)-q}\frac{(y-v)^{n-1}}{(n-1)!}=\gamma\int_0^\infty dy\frac{e^{x(v+y)}}{\psi(v+y)-q}\frac{y^{n-1}}{(n-1)!}$. Thus the recursion of \ref{conv:ii} allows us to identify $\Ho$, up to a proportionality constant, as an infinite series of iterated integrals; $$\Ho(x)=e^{\Phi(q)x}\bigg[1+\frac{\gamma}{(n-1)!}\int_0^\infty dy\frac{y^{n-1}e^{xy}}{\psi(\Phi(q)+y)-q}$$ $$+\left(\frac{\gamma}{(n-1)!}\right)^2\int_0^\infty dy\frac{y^{n-1}e^{xy}}{\psi(\Phi(q)+y)-q}\int_0^\infty dz\frac{z^{n-1}e^{xz}}{\psi(\Phi(q)+y+z)-q}+\cdots\bigg],\quad x\in (-\infty,-c].$$
\end{example}

\begin{remark}\label{remark}
In connection to the results of \cite{zbigniew-bo}:
\begin{enumerate}[(a)]
\item Let $q=0$. If $\omega\vert_{(-\infty,0]}= 0$, then $\Lo=1$ and \eqref{eq:renewal:bis} recovers \cite[Eq.~(2.23) with $\phi=0$]{zbigniew-bo}. If $\omega\vert_{(-\infty,0]}= \phi\in (0,\infty)$, then  $\Ho\vert_{(-\infty,0]}=e^{\Phi(\phi)\cdot}$ and \eqref{eq:renewal} is seen (via  $W^{(\phi)}=W+\phi W^{(\phi)}\star W$, that may be checked by taking Laplace transforms using \eqref{eq:laplace};  and via $\phi e^{\Phi(\phi)\cdot}\star W=e^{\Phi(\phi)\cdot}$, which follows directly from \eqref{eq:laplace}) to be a slight rewriting of \cite[Eq.~(2.23) with $\phi>0$]{zbigniew-bo}.
\item In \cite[Eq.~(2.25)]{zbigniew-bo}, for real $x\leq c$, the ``$\omega$-resolvent'' identity  $\int_0^\infty\PP_x\left[\exp\left(-\int_0^t\omega(X_u)du\right);t<\tau_c^+,X_t\in dy\right]dt$ is formally asserted only for the case when $\omega\vert_{(-\infty,0]}$ is constant, but it prevails of course in full generality (with our $\mathcal{H}^{(\omega)}_0$ replacing the $\mathcal{H}^{(\omega)}$ there): the proof consists only in using the resolvent identity for the two-sided exit problem \cite[Eq.~(2.15)]{zbigniew-bo} and the fact that  $\frac{\mathcal{H}^{(\omega)}_0(x)}{\mathcal{H}^{(\omega)}_0(c)}=\mathcal{B}_0^{(\omega)}(x,c)=\lim_{b\to -\infty}\PP_x\left[\exp\left\{-\int_0^{\tau_c^+}\omega(X_u)du\right\};\tau_c^+<\tau_b^-\right]$. For this reason we omit reproducing the expression here. 
\item The approach of \cite{zbigniew-bo} to handle the case when $\omega$ is constant on $(-\infty,0]$ is by taking limits  in the two-sided exit problem (as indicated in the previous item). We will follow an alternate, more direct route (also inspired by \cite{zbigniew-bo}), which will allow us to prove the result in greater generality.
\end{enumerate}
\end{remark}
\begin{proof}[Proof of Theorem~\ref{theorem}]\label{proof}
Let $x\leq y\leq c$ be real numbers. Since $X$ has no positive jumps, then $\PP_x$-a.s. $X_{\tau_y^+}=y$ on $\{\tau_y^+<\infty\}$, and  it follows by the strong Markov property of $X$ applied at the time $\tau_y^+$ and the memoryless property of the exponential distribution, that one has the multiplicative structure $$\Bo(x,c)=\Bo(x,y)\Bo(y,c).$$ Furthermore, it is clear that $\Bo(x,c)>0$ for all real $x\leq c$. As a consequence we may unambiguously define (with a preemptive choice of notation) $\Ho(x):=\frac{\Bo(x,c)}{\Bo(0,c)}$ for real $x\leq c$, $c\geq 0$. In short, then, $\Ho:\mathbb{R}\to (0,\infty)$, $\Ho(0)=1$, and \eqref{eq:scale} holds. It is clear that $\Ho$ is unique in having the preceding properties. 

Statements \ref{I}-\ref{II}-\ref{III}, apart from the continuity of $\Ho$, follow immediately from \eqref{eq:scale}-\eqref{what-we-are-after} and simple comparison arguments. To prove continuity of $\Ho$ note that for real $c$, $\tau_d^+\downarrow \tau_c^+$ as $d\downarrow c$, and that further for $x\in (-\infty,c)$, by  quasi-left-continuity and regularity of $0$ for $(0,\infty)$, $\PP_x$-a.s. also $\tau_d^+\uparrow \tau_c^+$ (on $\{\lim_{d\uparrow c}\tau_d^+<\infty\}$ and hence everywhere) as $d\uparrow c$. Then use bounded convergence in \eqref{eq:scale}-\eqref{what-we-are-after}, exploiting the facts that $\omega$ is locally bounded and that the law of the overall supremum $\overline{X}_\infty$ has no finite atoms, which implies that a.s.-$\PP_x$ on $\{\tau_c^+=\infty\}$ also $\tau_d^+=\infty$ for all $d<c$ that are sufficiently close to $c$.  (That the law of $\overline{X}_\infty$ has no finite atoms follows for instance from \eqref{eq:classical} and the continuity of $\Phi$.) For \ref{IV}, notice that by \eqref{eq:classical},  $\Ho(x)\leq\Ho(c)\PP_x(\tau_c^+<e_q)=\Ho(c)\PP_x[e^{-q\tau_c^+};\tau_c^+<\infty]=\Ho(c)e^{-\Phi(q)(c-x)}$ for real $x\leq c$.

We now prove \ref{conv:i}. Since the homogeneous convolution equation \eqref{eq:renewal} may be checked ``locally'', separately on each $(-\infty,c ]$ for $c\in \mathbb{R}$, we may assume (replacing $\omega$ by $\omega(\cdot\land c)$ if necessary) that $\omega$ is bounded by a $\lambda\in (0,\infty)$. Also, there is an $x_0\in \mathbb{R}$ such that $\omega$ is bounded below away from zero on $(-\infty,x_0]$ by some $a>0$. Let again $x\leq c$ be real numbers. Then, using the resolvent \cite[Theorem~2.7(ii)]{kkr} \footnotesize $$\int_0^\infty e^{-\lambda t}\PP_x(X_t\in dy,t<\tau_c^+\land e_q)dt=\left(e^{-\Phi(\lambda+q)(c-x)}\Wl(c-y)-\Wl(x-y) \right)dy\text{ for } y\in (-\infty,c],$$ \normalsize as well as the classical identity \eqref{eq:classical} $\PP_x[e^{-\lambda \tau_c^+};\tau_c^+<e_q]=e^{-\Phi(\lambda+q)(c-x)}$, it follows via the marked Poisson process technique of \cite{zbigniew-bo} -- letting $(T_i)_{i\in \mathbb{N}}$ be the arrival times of a homogeneous Poisson process of intensity $\lambda$, marked by an independent sequence $(M_i)_{i\in \mathbb{N}}$ of i.i.-uniformly on $[0,\lambda]$-d. random variables -- that  
$$\Bo(x,c)=\PP_x(M_i>\omega(X_{T_i})\text{ for all $i\in \mathbb{N}$ such that }T_i<\tau_c^+,\tau_c^+<e_q)$$
$$=\PP_x(T_1>\tau_c^+,\tau_c^+<e_q)+\PP_x[\Bo(X_{T_1},c);T_1<\tau_c^+\land e_q,M_1>\omega(X_{T_1})]$$
\begin{equation}\label{eq:HPP-trick}
=e^{-\Phi(\lambda+q)(c-x)}+\int_{-\infty}^c\Bo(y,c)(e^{-\Phi(\lambda+q)(c-x)}\Wl(c-y)-\Wl(x-y))(\lambda-\omega(y))dy.
\end{equation}
Next, plugging  \eqref{eq:scale} into \eqref{eq:HPP-trick}, multiplying both sides by $\Ho(c)$ and letting $c\uparrow \infty$, we obtain by monotone convergence using \eqref{eq:aux} that

$$\Ho=e^{\Phi(\lambda+q)\Id}h_\lambda+\left((\lambda-\omega)\Ho\right)\star \left(\frac{e^{\Phi(\lambda+q)\Id}}{\psi'(\Phi(\lambda+q))}-\Wl\right)$$ with $h_\lambda:=\lim_{c\to\infty}\Ho(c)e^{-\Phi(\lambda+q)c}$; a priori this limit  must  exist in $[0,\infty)$. Now convolute both sides of the preceding display by $\lambda \Wq$, exploiting the relations $e^{\Phi(\lambda+q)\cdot}=\lambda e^{\Phi(\lambda+q)\cdot}\star W^{(q)}$ (which is a direct consequence of \eqref{eq:laplace}) and $ \Wl=W^{(q)}+\lambda \Wl\star W^{(q)}$ (which may be checked by taking Laplace transforms and again using \eqref{eq:laplace}) that together imply $\left(\frac{e^{\Phi(\lambda+q)\Id}}{\psi'(\Phi(\lambda+q))}-\Wl\right)\star \lambda W^{(q)}=\left(\frac{e^{\Phi(\lambda+q)\Id}}{\psi'(\Phi(\lambda+q))}-\Wl\right)+W^{(q)}$, to obtain\footnotesize
$$\lambda \Ho\star \Wq=e^{\Phi(\lambda+q)\Id}h_\lambda+\left((\lambda-\omega)\Ho\right)\star \left(\frac{e^{\Phi(\lambda)\Id}}{\psi'(\Phi(q))}-\Wl+\Wq\right)=\Ho+\left((\lambda-\omega)\Ho\right)\star \Wq.$$\normalsize
Then the estimate $\frac{\Ho(x)}{\Ho(x_0)}\leq e^{-\Phi(q+a)(x_0-x)}$ for $x\in (-\infty, x_0]$ implies (via \eqref{eq:laplace} and the local boundedness of $\Ho$ and $\Wq$) that $\Ho\star \Wq$ is finite-valued and upon subtracting finite quantities we obtain \eqref{eq:renewal}. This concludes the proof of \ref{conv:i}.

Suppose now $(\omega e^{\Phi(q+p)\cdot})\star \Wq$ is finite-valued for all $p\in (0,\infty)$. From \ref{conv:i}, for each $p\in (0,\infty)$ and $n\in \mathbb{N}$, one has, for all $x\in \mathbb{R}$,
$$\mathcal{H}^{(p+\omega\land n)}_q(x)=[((\omega\land n+p)\mathcal{H}^{(p+\omega\land n)}_q)\star \Wq](x)=[((\omega\land n) \mathcal{H}^{(p+\omega\land n)}_q)\star \Wq](x) +p[\mathcal{H}^{(p+\omega\land n)}_q\star \Wq](x)
$$ \footnotesize
\begin{equation}\label{eq:on-the-way}
=\left[\int_{-\infty}^{0\land x}+\int_0^{0\lor x}\right](\omega(y)\land n)\mathcal{H}^{(p+\omega\land n)}_q(y)\Wq(x-y)dy+p\left[\int_{-\infty}^{0\land x}+\int_0^{x\lor 0}\right]\mathcal{H}^{(p+\omega\land n)}_q(y)\Wq(x-y)dy.
\end{equation} 
\normalsize
We now first pass to the limit  $n\to\infty$ as follows. In \eqref{eq:scale}-\eqref{what-we-are-after}  monotone (for the integral against the Lebesgue measure) and bounded (for the expectation) convergence yield $\mathcal{H}^{(p+\omega\land n)}_q\to \mathcal{H}^{(p+\omega)}_q$ as $n\to\infty$. Then in \eqref{eq:on-the-way} monotone (for the integrals on $[0,x\lor 0]$; recall \ref{III}) and dominated (for the integrals on $(-\infty,0\land x]$; using the assumed integrability condition and the estimate $\mathcal{H}^{(p+\omega\land n)}_q(y)\leq e^{\Phi(q+p)y}$ for $y\in (-\infty,0]$) convergence produce
$$\mathcal{H}^{(p+\omega)}_q(x)=[(\omega \mathcal{H}^{(p+\omega)}_q)\star \Wq](x) +p[\mathcal{H}^{(p+\omega)}_q\star \Wq](x)
$$ 
\begin{equation}\label{eq:on-the-way-2}
=\left[\int_{-\infty}^{0\land x}+\int_0^{0\lor x}\right]\omega(y)\mathcal{H}^{(p+\omega)}_q(y)\Wq(x-y)dy+p\int_{-\infty}^{x}\mathcal{H}^{(p+\omega)}_q(y)\Wq(x-y)dy.
\end{equation} 
Let us next write, for the purposes of the remainder of this proof only, $\mathcal{H}^{(p+\omega)}_q=:H_p$ and $\Ho=:H$ for short. We proceed to pass to the limit $p\downarrow 0$. In \eqref{eq:scale}-\eqref{what-we-are-after}, similarly as before, by bounded  (for the integral against the Lebesgue measure; recall $\omega$ is locally bounded) and monotone (for the expectation) convergence, we obtain that $H_p\to H$ as $p\downarrow 0$. Then in \eqref{eq:on-the-way-2}, $[(\omega H_p)\star \Wq](x)\to [(\omega H)\star \Wq](x)$, as $p\downarrow 0$, by monotone (for the integral on $(-\infty,0\land x]$; recall \ref{III}) and bounded (for the integral on $[0,x\lor 0]$; use the facts that $H_p$ is nondecreasing, that $H_p(c)$ is bounded in bounded $p$ given a fixed $c\in [0,\infty)$, and that $\Wq$ and $\omega$ are locally bounded) convergence.  Finally we consider $$L_x:=\lim_{p\downarrow 0}e^{-\Phi(q)x}p(H_p\star \Wq)(x)=e^{-\Phi(q)x}\lim_{p\downarrow 0}p\int_{-\infty}^xH_p(y)\Wq(x-y)dy;$$ a priori this limit must exist in $[0,\infty)$. We show that $L_x$ does not depend on $x$, thus demonstrating that \eqref{eq:renewal-zussamen} is indeed satisfied for some, necessarily unique, $\Lo\in [0,1]$. 

Now, since $\Wq$ is locally bounded, since $H_p$ is nondecreasing, and since $H_p(c)$ is bounded in bounded $p$ given a fixed real $c$, it is clear that for any choice of $a\in (-\infty,x]$, $$L_x=\lim_{p\downarrow 0}p\int_{-\infty}^aH_p(y)e^{-\Phi(q)y}\Wq(x-y)e^{-\Phi(q)(x-y)}dy.$$ Suppose now first that $\psi'(\Phi(q)+)>0$. Then, given any $\epsilon>0$ we may \eqref{eq:aux} choose this $a$ to be (for simplicity) $\leq 0$ and such as to render $\vert \Wq(x-y)e^{-\Phi(q)(x-y)}-1/\psi'(\Phi(q)+)\vert\leq \epsilon$ for all $y\leq a$. Consequently, since (using the estimate $H_p(y)\leq e^{\Phi(q+p)y}$ for $y\leq 0$) $\limsup_{p\downarrow 0}p\int_{-\infty}^0H_p(y)e^{-\Phi(q)y}dy\leq \lim_{p\downarrow 0}\frac{p}{\Phi(q+p)-\Phi(q)}=\psi'(\Phi(q)+)<\infty$, we conclude that $L_x$ in fact does not depend on $x$. For the case when $\psi'(\Phi(q)+)=0$, i.e. the case $q=\Phi(q)=\psi'(0+)=0$,  we have that $L_x=\lim_{p\downarrow 0}p\int_{-\infty}^aH_p(y)W(x-y)dy$ for any $a\in (-\infty,x]$. We argue that $Q:=\limsup_{p\downarrow 0}p\int_{-\infty}^aH_p(y)(W(x-y)-W(a-y))dy=0$ for  $a$ that are (again for simplicity) $\leq 0\land x$, which will complete the verification that $L_x$ does not depend on $x$. Indeed, since $H_p(y)\leq H_p(a)e^{-\Phi(p)(a-y)}\leq e^{-\Phi(p)(a-y)}$ for $y\leq a\leq 0$, we have that $Q\leq \lim_{p\downarrow 0}p\int_{-a}^\infty e^{-\Phi(p)(y+a)}(W(x+y)-W(a+y))dy=\lim_{p\downarrow 0}p[e^{\Phi(p)(x-a)}(p^{-1}-\int_0^{x-a}e^{-\Phi(p)z}W(z)dz)-p^{-1}]=0$. 

The claims of \ref{conv:ii} follow at once from the above and from Lemma~\ref{lemma} to feature immediately.
\end{proof}
We have, regarding uniqueness of the solutions to \eqref{eq:renewal:bis}, the following

\begin{lemma}\label{lemma}
Suppose $(\omega e^{\Phi(q)\cdot})\star \Wq$ is finite-valued (which obtains if $\omega$ has a subexponential left tail). Let $G:\mathbb{R}\to \mathbb{R}$ be Borel measurable and locally bounded with a left tail that is $\Phi(q)$-subexponential. Then:
\begin{enumerate}[(i)]
\item \label{lemma:o} $\lim_{x\to -\infty}e^{-\Phi(q)x}((\omega G)\star \Wq)(x)=0$.
\item\label{lemma:i} $G=(G\omega)\star \Wq$ implies $G= 0$. 
\item\label{lemma:ii} Let now further $g:\mathbb{R}\to [0,\infty)$ be locally bounded Borel measurable with a left tail that is $\Phi(q)$-subexponential. Suppose $G\geq 0$ and $G=g+(\omega G)\star \Wq$. Then $G=G_\infty:={\uparrow\!\!\!\text{--}\!\lim}_{n\to\infty}G_n$, where the $G_n$ are given recursively: $G_0:=g$ and $G_{n+1}:=g+(\omega G_n)\star \Wq$ for $n\in \mathbb{N}_0$.
\end{enumerate}
\end{lemma}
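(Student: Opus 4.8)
The plan is to prove the three items in the order stated, since \ref{lemma:i} will rest on \ref{lemma:o} and \ref{lemma:ii} on \ref{lemma:i}. Throughout I would work with the kernel $W_{\Phi(q)}:=e^{-\Phi(q)\cdot}\Wq$, which by \eqref{eq:aux} is nonnegative, vanishes on $(-\infty,0)$, and is nondecreasing (with $W_{\Phi(q)}\uparrow 1/\psi'(\Phi(q)+)$); crucially, only its monotonicity will be used, and this holds whether or not $\psi'(\Phi(q)+)=0$, so the two regimes are treated uniformly.

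For \ref{lemma:o}, the starting point is the identity, valid for every $x$, obtained by writing $\Wq=e^{\Phi(q)\cdot}W_{\Phi(q)}$ and cancelling exponentials:
$$e^{-\Phi(q)x}\big((\omega e^{\Phi(q)\cdot})\star\Wq\big)(x)=\int_{-\infty}^x\omega(y)W_{\Phi(q)}(x-y)\,dy=(\omega\star W_{\Phi(q)})(x).$$
Using the $\Phi(q)$-subexponential left tail of $G$, fix $x_0$ and $\gamma$ with $|G|\leq\gamma e^{\Phi(q)\cdot}$ on $(-\infty,x_0]$; then for $x\leq x_0$ the whole integration range lies in $(-\infty,x_0]$, so the same bookkeeping gives $\big|e^{-\Phi(q)x}((\omega G)\star\Wq)(x)\big|\leq\gamma\,(\omega\star W_{\Phi(q)})(x)$. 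It therefore suffices to show $(\omega\star W_{\Phi(q)})(x)\to0$ as $x\to-\infty$. Fixing $x_1$ and substituting $v:=x_1-y$, for $x\leq x_1$ and $\delta:=x_1-x\geq0$ one gets $(\omega\star W_{\Phi(q)})(x)=\int_\delta^\infty\omega(x_1-v)W_{\Phi(q)}(v-\delta)\,dv\leq\int_\delta^\infty\omega(x_1-v)W_{\Phi(q)}(v)\,dv$, the inequality by monotonicity of $W_{\Phi(q)}$. The right-hand side is the tail of the integral $(\omega\star W_{\Phi(q)})(x_1)$, finite by hypothesis, hence tends to $0$ as $\delta\to\infty$, i.e. as $x\to-\infty$.

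For \ref{lemma:i}, set $m:=e^{-\Phi(q)\cdot}|G|$. Applying \ref{lemma:o} to $G$ together with $G=(\omega G)\star\Wq$ gives $m(x)\to0$ as $x\to-\infty$; since $G$ is locally bounded, so is $m$, whence $\sup_{(-\infty,a]}m<\infty$ for each $a$. The equation yields $m(x)\leq\int_{-\infty}^x\omega(y)m(y)W_{\Phi(q)}(x-y)\,dy$ after the same exponential bookkeeping. Since $(\omega\star W_{\Phi(q)})(x)\to0$ (the case $G=e^{\Phi(q)\cdot}$ of \ref{lemma:o}), I may fix $a$ with $(\omega\star W_{\Phi(q)})\leq\tfrac12$ on $(-\infty,a]$; then for $x\leq a$, $m(x)\leq\big(\sup_{(-\infty,a]}m\big)(\omega\star W_{\Phi(q)})(x)\leq\tfrac12\sup_{(-\infty,a]}m$, and taking the supremum forces $\sup_{(-\infty,a]}m=0$, i.e. $G\equiv0$ on $(-\infty,a]$. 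To propagate forward, note that for $x>a$ the equation reads $G(x)=\int_a^x\omega(y)G(y)\Wq(x-y)\,dy$; on any $[a,c]$, $\omega$ and $\Wq$ are bounded, so $|G(x)|\leq K_c\int_a^x|G(y)|\,dy$ with $|G|$ bounded on $[a,c]$, and a standard Gronwall iteration gives $G\equiv0$ on $[a,c]$. As $c$ is arbitrary, $G\equiv0$. For \ref{lemma:ii}, write $T(H):=g+(\omega H)\star\Wq$; since $\omega,\Wq\geq0$, $T$ is monotone on nonnegative functions, and $G_0=g\leq T(g)=G_1$ gives $G_n\uparrow$, while $g\leq G=T(G)$ gives inductively $G_n\leq G$, so $G_\infty\leq G<\infty$ and $(\omega G_\infty)\star\Wq\leq(\omega G)\star\Wq<\infty$. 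Monotone convergence passes the limit through $T$, so $G_\infty=g+(\omega G_\infty)\star\Wq$; hence $D:=G-G_\infty\geq0$ solves $D=(\omega D)\star\Wq$ and inherits local boundedness and a $\Phi(q)$-subexponential left tail from $0\leq G_\infty\leq G$, so \ref{lemma:i} forces $D=0$, i.e. $G=G_\infty$.

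The one genuinely delicate point is the uniqueness in \ref{lemma:i}: the ``total mass'' $(\omega\star W_{\Phi(q)})$ need not be globally $<1$, so no single global contraction is available. The fix is the two-stage scheme above—first use the decay of this mass at $-\infty$ (from \ref{lemma:o}) to contract on a far-left half-line, then exploit the causal (Volterra) structure of $\,\cdot\star\Wq$ to bootstrap forward on compacts by Gronwall. The clean reduction of all exponential factors to the single monotone kernel $W_{\Phi(q)}$ is what makes both stages, as well as the $-\infty$ tail estimate in \ref{lemma:o}, go through uniformly in the regimes $\psi'(\Phi(q)+)>0$ and $\psi'(\Phi(q)+)=0$.
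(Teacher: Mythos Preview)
Your argument is correct and follows the paper's strategy closely: item~\ref{lemma:o} is proved by the same reduction to the kernel $W_{\Phi(q)}=e^{-\Phi(q)\cdot}\Wq$ and the vanishing of $(\omega\star W_{\Phi(q)})(x)$ as $x\to-\infty$ (you phrase this as a tail-of-integral bound after a substitution, the paper invokes dominated convergence; these are the same computation); item~\ref{lemma:ii} is identical to the paper's, and you spell out the monotonicity of the $G_n$ a little more explicitly.

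The one genuine difference is in the second stage of~\ref{lemma:i}. Both you and the paper first contract on a far-left half-line using $(\omega\star W_{\Phi(q)})\leq\tfrac12$ there. To propagate to the right, you bound $\omega$ and $\Wq$ on a compact and apply Gronwall to $|G(x)|\leq K_c\int_a^x|G(y)|\,dy$. The paper instead introduces an exponentially weighted sup-norm $\sup_{[0,y_0]}|F(y)|e^{-s_0 y}$ with $s_0$ chosen so that $\psi(s_0)-q$ dominates the local bound on $\omega$, and uses the Laplace transform identity \eqref{eq:laplace} to get a contraction constant $\theta_0/(\psi(s_0)-q)<1$. Your Gronwall route is more elementary and treats $\Wq$ as a generic locally bounded Volterra kernel; the paper's route exploits the specific Laplace-transform structure of $\Wq$. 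Both are short and neither offers a real advantage in this setting.
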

\begin{proof}
\ref{lemma:o}. We have $\vert e^{-\Phi(q)x}((\omega G)\star \Wq)(x)\vert \leq  \int_{-\infty}^\infty\omega(y)\vert G(y)\vert e^{-\Phi(q)y}\Wq(x-y)e^{-\Phi(q)(x-y)}dy$. Since $G$ has a left tail that is $\Phi(q)$-subexponential and since it is locally bounded it follows that there is a $\gamma<\infty$ such that $\vert G(y)\vert e^{-\Phi(q)y}\leq \gamma$ for all $y\in (-\infty,0]$ (say). Therefore, for $x\in (-\infty,0]$, $\vert e^{-\Phi(q)x}((\omega G)\star \Wq)(x)\vert \leq \gamma\int_{-\infty}^\infty \omega(y)\Wq(x-y)e^{-\Phi(q)(x-y)}dy$, which is $<\infty$ by assumption. Now by \eqref{eq:aux} $\Wq(x-y)e^{-\Phi(q)(x-y)}$ is nonincreasing to $0$ as $x\downarrow-\infty$. Thus the conclusion follows by dominated convergence.

\ref{lemma:i}. Denote, for $x\in \mathbb{R}$,  $\Vert G\Vert_{x}:=\sup_{y\in (-\infty,x]}\vert G(y)\vert e^{-\Phi(q)y}$. Note this quantity is finite because $G$ has a tail that is $\Phi(q)$-subexponential and because it is locally bounded. Then $G=(G\omega)\star \Wq$ implies that for all $x\in \mathbb{R}$ one has $\vert G(x)\vert e^{-\Phi(q)x}\leq  \Vert G\Vert_x\int_{-\infty}^\infty\omega(y)\Wq(x-y)e^{-\Phi(q)(x-y)}dy= \Vert G\Vert_x (\omega\star (e^{-\Phi(q)\cdot }\Wq))(x)$. By \ref{lemma:o} $ (\omega\star (e^{-\Phi(q)\cdot }\Wq))(x)=e^{-\Phi(q)x}((\omega e^{\Phi(q)\cdot})\star \Wq)(x)\to 0 $ as $x\downarrow -\infty$, so there is an $x_0\in \mathbb{R}$ such that $(\omega\star (e^{-\Phi(q)\cdot }\Wq))(x)\leq I_{x_0}:=(\omega\star (e^{-\Phi(q)\cdot }\Wq))(x_0)<1$ for all $x\in (-\infty,x_0]$. At the same time, the above estimate implies $\Vert G\Vert_{x_0}\leq I_{x_0}\Vert G\Vert_{x_0}$, hence $\Vert G\Vert_{x_0}=0$, which forces $G$ to vanish on $(-\infty,x_0]$. Let us now shift all the functions by $x_0$ for (notational) convenience; to wit $F:=G(x_0+\cdot)$ and $\theta:=\omega(x_0+\cdot)$ are Borel measurable, locally bounded and $F=(F\theta)\star \Wq$.  From this we obtain finally that $F= 0$ by the following argument. Fix $y_0\in [0,\infty)$ and let $\theta_0$ be an upper bound for $\theta$ on $[0,y_0]$. We may choose $s_0\in (0,\infty)$ such that $\psi(s_0)-q>\theta_0$. Denote $\Vert F\Vert:=\sup_{y\in [0,y_0]}\vert F(y)\vert e^{-s_0y}$. Then, for each $y\in [0,y_0]$, $F=(F\theta)\star \Wq$ and \eqref{eq:laplace} imply $\vert F(y)\vert e^{-s_0y}\leq \Vert F\Vert \theta_0\int_0^{y}e^{-s_0(y-z)}\Wq(y-z)dz\leq  \Vert F\Vert \theta_0/(\psi(s_0)-q)$. Again this renders $\Vert F\Vert=0$, and completes the proof of \ref{lemma:i}. 

\ref{lemma:ii}. By induction one proves that $G_n\leq G$ for all $n\in \mathbb{N}_0$. Moreover, passing to the limit in the recursion via monotone convergence, one finds that $G_\infty=g+(\omega G_\infty)\star \Wq$. It follows that $G-G_\infty$ has a $\Phi(q)$-subexponential left tail, is locally bounded, Borel measurable and satisfies $G-G_\infty=((G-G_\infty)\omega)\star\Wq$. By \ref{lemma:i} it means that $G=G_\infty$.
\end{proof}
As is to be expected, the solution to \eqref{what-we-are-after} is associated to a family of (local) martingales. Recall the process $Y$ from \eqref{eq:Y}.

\begin{proposition}\label{proposition:martingale}
Let $c\in \mathbb{R}$, $\gamma\in (0,\infty)$. Define the processes $Z$ and $W$ as follows:
 $$Z_t:=\exp\left(-\int_0^t\omega(X_u)du-qt\right)\Ho(X_t),\quad t\in [0,\infty),$$
and
$$W_s:=e^{-\gamma s}\mathcal{H}^{(\gamma\omega)}_q(Y_s)\mathbbm{1}_{\{s<\zeta\}},\quad s\in [0,\infty).$$
Let further $\mathcal{F}=(\FF_t)_{t\in [0,\infty)}$ be any filtration relative to which $X$ is adapted and has independent increments. Then:
\begin{enumerate}[(i)]
\item\label{mtg:i} The stopped process $Z^{\tau_c^+}$ is a bounded c\`adl\`ag martingale in the filtration $\mathcal{F}$ under $\PP_x$ for each $x\in \mathbb{R}$; for real $x\leq c$ the $\PP_x$-terminal value of this martingale is $\Ho(c)\exp\{-\int_0^{\tau_c^+}\omega(X_s)ds-q\tau_c^+\}\mathbbm{1}_{\{\tau_c^+<\infty\}}$.
\item\label{mtg:ii} Assume $\omega$ is strictly positive and $e_q$ is independent of $\FF_\infty$ (under $\PP_x$ for each $x\in \mathbb{R}$). Then the stopped process $W^{T_c^+}$ is a c\`adl\`ag bounded martingale in the filtration $\GG=(\GG_s)_{s\in [0,\infty)}:=(\FF_{\rho_s}\lor \sigma(\{\{\rho_u<e_q\}:u\in [0,s]\}))_{s\in [0,\infty)}$ under $\PP_x$ for each $x\in \mathbb{R}$.
\end{enumerate}
\end{proposition}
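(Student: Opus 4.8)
The plan is to prove the two martingale assertions by reducing everything to the multiplicative/Markov structure already isolated in the proof of Theorem~\ref{theorem}, rather than attempting any direct It\^o-type computation. For part~\ref{mtg:i}, the key observation is that the scale identity \eqref{eq:scale} together with the multiplicative property $\Bo(x,c)=\Bo(x,y)\Bo(y,c)$ is \emph{exactly} the statement that $Z^{\tau_c^+}$ is a martingale in disguise. Concretely, I would first check boundedness: on $\{t<\tau_c^+\}$ one has $X_t\leq c$ by absence of positive jumps (up to the jump over $c$, which occurs precisely at $\tau_c^+$), so $\Ho(X_t)\leq \Ho(c)$ by monotonicity~\ref{I}, while the exponential factor is $\leq 1$; hence $\vert Z^{\tau_c^+}_t\vert\leq \Ho(c)$. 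C\`adl\`ag-ness is immediate from that of $X$ and the continuity of $\Ho$.

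For the martingale property itself, fix $0\leq s\leq t$ and compute $\PP_x[Z^{\tau_c^+}_t\mid \FF_s]$ on $\{s<\tau_c^+\}$ (on the complement the stopped process is already constant). Writing the additive functional as $\int_0^{\tau_c^+\land t}\omega(X_u)du=\int_0^s\omega(X_u)du+\int_s^{\tau_c^+\land t}\omega(X_u)du$ and using the independent-increments/strong-Markov structure of $X$ in $\FF$ together with the memorylessness that makes the $q$-killing ``reset'', the conditional expectation factors as $\exp\{-\int_0^s\omega(X_u)du-qs\}$ times $\PP_{X_s}[\exp\{-\int_0^{\tau_c^+\land(t-s)}\omega(X_u)du\}\Ho(X_{\tau_c^+\land(t-s)})e^{-q(\tau_c^+\land(t-s))}]$. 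The claim is that the inner quantity equals $\Ho(X_s)$; this is precisely the assertion that $Z^{\tau_c^+}$ is a martingale, and it follows by letting $t-s$ be arbitrary and invoking \eqref{eq:scale}: on $\{\tau_c^+\leq t-s\}$ the inner process hits its terminal value $\Ho(c)\Bo(X_s,c)=\Ho(X_s)$, and on the complement one re-expresses $\Ho(X_{t-s})$ via a second application of the multiplicative structure. A clean way to organize this is to verify directly that the terminal value $M:=\Ho(c)\exp\{-\int_0^{\tau_c^+}\omega(X_s)ds-q\tau_c^+\}\mathbbm{1}_{\{\tau_c^+<\infty\}}$ satisfies $\PP_x[M\mid \FF_t]=Z^{\tau_c^+}_t$ for all $t$: using \eqref{eq:scale}-\eqref{what-we-are-after} and the strong Markov property at $t\land\tau_c^+$, one gets $\PP_x[M\mid \FF_t]=Z_{t\land\tau_c^+}\cdot\Bo(X_{t\land\tau_c^+},c)\Ho(c)/\Ho(X_{t\land\tau_c^+})=Z^{\tau_c^+}_t$, the last equality again by \eqref{eq:scale}. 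This closes~\ref{mtg:i} and simultaneously identifies the terminal value.

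Part~\ref{mtg:ii} I would then deduce from~\ref{mtg:i} by the time change, applied to $\omega$ replaced by $\gamma\omega$. The point is that under the time change $\rho$ the integrated rate $\int_0^{\rho_s}\gamma\omega(X_u)du$ becomes simply $\gamma s$ for $s<\zeta$ (by the very definition of $\rho$ as the inverse additive functional), while $\Ho$ gets replaced by $\Hog=\mathcal{H}^{(\gamma\omega)}_q$, so that $Z$ built from $\gamma\omega$ evaluated at $t=\rho_s$ becomes exactly $W_s$ on $\{s<\zeta\}$. Thus $W^{T_c^+}$ is the $\GG$-adapted time-changed version of the $\FF$-martingale $Z^{\tau_c^+}$ (for the rate $\gamma\omega$), and since $\rho_s<\infty$ corresponds to $\{s<\zeta\}=\{\rho_s<e_q\}$, the $\GG_s$-conditioning matches the $\FF_{\rho_s}$-conditioning augmented by the killing information. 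Boundedness is inherited ($\vert W^{T_c^+}_s\vert\leq \Hog(c)$ by the same monotonicity argument), and c\`adl\`ag-ness follows because $\rho$ is continuous and strictly increasing where finite.

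The main obstacle is the careful bookkeeping of the two independent sources of killing in part~\ref{mtg:ii}: one must verify that the optional time $T_c^+=\int_0^{\tau_c^+}\gamma\omega(X_u)du$ really pulls back to $\tau_c^+$ under $\rho$, that $\{T_c^+<\zeta\}=\{\tau_c^+<e_q\}$, and that the filtration $\GG$ is exactly the one in which the time-changed optional stopping theorem may be invoked, i.e.\ that $\rho_s$ is an $\FF$-stopping time for each fixed $s$ and that the extra $\sigma$-field $\sigma(\{\{\rho_u<e_q\}:u\in[0,s]\})$ accounts for the $e_q$-killing without destroying the martingale identity. Once one confirms that conditioning on $\GG_s$ and evaluating the $\FF$-martingale of~\ref{mtg:i} (for rate $\gamma\omega$) at the $\FF$-stopping time $\rho_s$ commute in the required way---using the independence of $e_q$ from $\FF_\infty$ and the memoryless property exactly as in the Markov-property discussion surrounding \eqref{eq:Y}---the martingale property of $W^{T_c^+}$ transfers directly from that of $Z^{\tau_c^+}$ by the optional stopping / time-change principle, and the proof concludes.
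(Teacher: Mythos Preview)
Your approach is essentially the paper's: part~\ref{mtg:i} via the terminal-value identity $\PP_x[M\mid\FF_t]=Z^{\tau_c^+}_t$ coming straight from \eqref{eq:scale}, and part~\ref{mtg:ii} by optional sampling of the part-\ref{mtg:i} martingale at the $\FF$-stopping times $\rho_s$ and then identifying the resulting object as $W^{T_c^+}$. Two small imprecisions to clean up: first, $T_c^+=\int_0^{\tau_c^+}\omega(X_u)\,du$ (no factor $\gamma$; the $\gamma$ enters only through $e^{-\gamma T_c^+}$), since $\rho$ is defined from $\omega$ alone; second, $Z$ built from $\gamma\omega$ evaluated at $\rho_s$ is $e^{-\gamma s-q\rho_s}\Hog(Y_s)$ on $\{s<\zeta\}$, not $W_s$ itself, and the paper's device for absorbing the surplus $e^{-q\rho_s}$ is to use the independence of $e_q$ from $\FF_\infty$ to trade $e^{-q(\rho_s\land\tau_c^+)}\mathbbm{1}_{\{\rho_s\land\tau_c^+<\infty\}}$ under $\PP_x$ for the event $\{\rho_s\land\tau_c^+<e_q\}=\{s\land T_c^+<\zeta\}$---which is exactly the ``bookkeeping of the two independent sources of killing'' you flag as the obstacle.
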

\begin{remark}
As a check, since $W^{T_c^+}$ has a constant expectation,  we recover \eqref{eq:pssMp-Laplace} in the limit as time goes to infinity.
\end{remark}
\begin{proof} 
We may assume $x\leq c$. 

\ref{mtg:i}. Let $t\in [0,\infty)$. Then in Markov process theory parlance (for notational simplicity only; ultimately no shift operators are of course needed here)  $\tau_c^+\land t+\tau_c^+\circ \theta_{\tau_c^+\land t}=\tau_c^+$ and $\PP_x$-a.s. $Z_{t\land \tau_c^+}=\Ho(c)\exp(-\int_0^{t\land\tau_c^+}\omega(X_u)du-q(t\land\tau_c^+))\PP_{X_{t\land \tau_c^+}}\left[\exp\{-\int_0^{\tau_c^+}\omega(X_u)du-q\tau_c^+\};\tau_c^+<\infty\right]=\Ho(c)\PP_x[(\exp\{-\int_0^{\tau_c^+}\omega(X_u)du-q\tau_c^+\}\mathbbm{1}_{\{\tau_c^+<\infty\}})\circ \theta_{\tau_c^+\land t}\exp\{-\int_0^{t\land\tau_c^+}\omega(X_u)du-q(t\land\tau_c^+)\}\vert \FF_{t\land \tau_c^+}]=\Ho(c)\PP_x[\exp\{-\int_0^{\tau_c^+}\omega(X_u)du-q\tau_c^+\}\mathbbm{1}_{\{\tau_c^+<\infty\}}\vert\FF_{t\land \tau_c^+}]=\Ho(c)\PP_x[\exp\{-\int_0^{\tau_c^+}\omega(X_u)du-q\tau_c^+\}\mathbbm{1}_{\{\tau_c^+<\infty\}}\vert\FF_{t}]$, which establishes the first claim. 

\ref{mtg:ii}. 
For all real $0\leq s\leq t$, $A\in \FF_{\rho_s}$, applying the optional sampling theorem to the process $Z^{\tau_c^+}$ at the times $\rho_s$ and $\rho_t$, we obtain $$\PP_x\left[\exp\left(-\int_0^{\rho_t\land \tau_c^+}\omega(X_u)du-q(\rho_t\land \tau_c^+)\right)\Ho(X_{\rho_t\land\tau_c^+});A\cap\{\rho_t\land \tau_c^+<\infty\}\right]$$
$$=\PP_x\left[\exp\left(-\int_0^{\rho_s\land \tau_c^+}\omega(X_u)du-q(\rho_s\land \tau_c^+)\right)\Ho(X_{\rho_s\land\tau_c^+});A\cap \{\rho_s\land \tau_c^+<\infty\}\right],$$ i.e., because $\rho_t\land \tau_c^+=\rho_t\land \rho_{T_c^+}=\rho_{t\land T_c^+}$ on $\{\rho_t\land\tau_c^+<e_q\}=\{t\land T_c^+<\zeta\}$, since $\rho$ is the inverse of $\int_0^\cdot \omega(X_u)du$, and by the independence of $e_q$ from $\FF_\infty$,\footnotesize
 $$\PP_x\left[\exp\left(-t\land T_c^+\right)\Ho(Y_{t\land T_c^+});A\cap\{t\land T_c^+<\zeta\}\right]=\PP_x\left[\exp\left(-s\land T_c^+\right)\Ho(Y_{s\land T_c^+});A\cap\{s\land T_c^+<\zeta\}\right].$$\normalsize This implies that $(e^{- s}\mathcal{H}^{(\omega)}_q(Y_s)\mathbbm{1}_{\{s<\zeta\}})_{s\in [0,\infty)}$ stopped at $T_c$ is a martingale in the filtration $\GG$ under $\PP_x$, because this process is constant on $[\zeta,\infty)$, and since for $s\in [0,\infty)$, $\{s<\zeta\}=\{\rho_s<e_q\}$ with the equality of the trace $\sigma$-fields  $\mathcal{G}_s\vert_{\{\rho_s<e_q\}}=\mathcal{F}_{\rho_s}\vert_{\{\rho_s<e_q\}}$ holding true. Replacing $\omega$ with $\gamma\omega$ shows the same is true of the process $(e^{-\gamma s}\mathcal{H}^{(\gamma\omega)}_q(Y_s)\mathbbm{1}_{\{s<\zeta\}})_{s\in [0,\infty)}$ stopped at $T_c^+$. 
\end{proof}
Apart from the solutions presented in Examples~\ref{example:one},~\ref{example:two},~\ref{example:csbp} and~\ref{example:new}, it seems difficult to come up with ``nice'' $\omega$ for which $\Ho$ is given explicitly (in terms of $\psi$ and $\Phi$, say), at least for a general $\Wq$. However, based on Example~\ref{example:two},  we can ``reverse-engineer'' a class of $\omega$ for which $\Ho$ is explicit, in the precise sense of

\begin{proposition}
Let $\nu$ be a probability measure on the Borel sets of $(0,\infty)$ whose support is compactly contained in $(0,\infty)$. Denote, for $\alpha\in (0,\infty)$,  $\Ha:=\mathcal{H}^{(e^{\alpha\cdot})}_q$ and $\La:=L^{(e^{\alpha\cdot})}_q$. Then $H=\Ho$ and $L=\Lo$, where $L:=\int \La\nu(d\alpha)$ and, for $x\in \mathbb{R}$,  $H(x):=\int \Ha(x)\nu(d\alpha)$ and $\omega(x):=\frac{\int \Ha(x)e^{\alpha x}\nu(d\alpha)}{\int \Ha(x)\nu(d\alpha)}$.
\end{proposition}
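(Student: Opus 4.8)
The plan is to reverse the computation of Example~\ref{example:two}: I would show that the proposed $H$ solves the inhomogeneous convolution equation \eqref{eq:renewal:bis} of case~\ref{conv:ii} with the constant $L$ in place of $\Lo$, check that $\omega$ and $H$ fall within the scope of \ref{conv:ii}, and then play the homogeneous uniqueness of part~\ref{lemma:i} of Lemma~\ref{lemma} against the common normalization $H(0)=\Ho(0)=1$ in order to force \emph{simultaneously} $L=\Lo$ and $H=\Ho$. The normalization is what breaks the apparent circularity, since solutions of \eqref{eq:renewal:bis} scale linearly in the constant and hence cannot be matched by the equation alone.

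First I would derive the equation for $H$. For each $\alpha$ in the support of $\nu$ the function $\Ha$ falls under case~\ref{conv:ii} (cf.\ Example~\ref{example:two}) and hence satisfies $\Ha=\La\,e^{\Phi(q)\cdot}+(e^{\alpha\cdot}\Ha)\star\Wq$. Integrating this identity against $\nu(d\alpha)$ and interchanging the order of integration by Tonelli (all integrands being nonnegative) gives $H=L\,e^{\Phi(q)\cdot}+g\star\Wq$, where $g(y):=\int e^{\alpha y}\Ha(y)\,\nu(d\alpha)$. But by the very definition of $\omega$ one has $g(y)=\omega(y)H(y)$ for every $y$, so this reads $H=L\,e^{\Phi(q)\cdot}+(\omega H)\star\Wq$, which is exactly \eqref{eq:renewal:bis} with $L$ in place of $\Lo$.

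The hard part will be verifying that $\omega$ (and hence $H$) is admissible, and this is precisely where the compactness of $\mathrm{supp}\,\nu\subset(0,\infty)$ is used. Writing $[\alpha_-,\alpha_+]\supseteq\mathrm{supp}\,\nu$ with $0<\alpha_-\le\alpha_+<\infty$, I would first observe via the explicit representation of Example~\ref{example:two} that $(x,\alpha)\mapsto\Ha(x)$ is jointly continuous; this makes $\omega$ Borel and strictly positive, and locally bounded, since on each compact the denominator $\int\Ha\,\nu(d\alpha)$ is bounded below by joint continuity and strict positivity of $\Ha$. For the left tail I would use that $e^{-\Phi(q)x}\Ha(x)\to\La$ as $x\to-\infty$ \emph{uniformly} in $\alpha\in[\alpha_-,\alpha_+]$ (uniformity coming from the uniform convergence of the defining series of Example~\ref{example:two} on the compact $\alpha$-range, together with $\La$ being bounded away from $0$ there): this yields a lower bound on the denominator of order $e^{\Phi(q)x}$, while \ref{IV} bounds the numerator by $e^{\Phi(q)x}\int e^{\alpha x}\nu(d\alpha)\le e^{(\Phi(q)+\alpha_-)x}$ for $x\le 0$, so that $\omega(x)\lesssim e^{\alpha_- x}$ as $x\to-\infty$. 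Thus $\omega$ has a subexponential left tail, in particular $(\omega e^{\Phi(q)\cdot})\star\Wq$ is finite-valued and \ref{conv:ii} applies to $\omega$. The same estimate $\Ha\le e^{\Phi(q)\cdot}$ on $(-\infty,0]$ from \ref{IV} shows $H\le e^{\Phi(q)\cdot}$ there, so $H$ is locally bounded, Borel, and has a $\Phi(q)$-subexponential left tail; that is, $H$ lies in the uniqueness class of \ref{conv:ii}.

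Finally I would conclude by uniqueness. Since \ref{conv:ii} applies to $\omega$, the genuine $\Ho$ satisfies $\Ho=\Lo\,e^{\Phi(q)\cdot}+(\omega\Ho)\star\Wq$, while $H$ satisfies the same equation with $L$. The combination $F:=\Lo H-L\Ho$ then satisfies the homogeneous equation $F=(\omega F)\star\Wq$ and still lies in the class (being a linear combination of such functions), so part~\ref{lemma:i} of Lemma~\ref{lemma} forces $F\equiv 0$, i.e.\ $\Lo H=L\Ho$. Evaluating at $0$ and using $H(0)=\int\Ha(0)\,\nu(d\alpha)=\int 1\,\nu(d\alpha)=1=\Ho(0)$ gives $L=\Lo$; and since $\Lo\in(0,1]$ by \eqref{eq:limit}, dividing through yields $H=\Ho$, which completes the proof.
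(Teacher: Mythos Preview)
Your proposal is correct and follows essentially the same route as the paper: integrate the convolution equations for the $\Ha$ against $\nu$ via Tonelli to obtain $H=L\,e^{\Phi(q)\cdot}+(\omega H)\star\Wq$, verify that $\omega$ has a subexponential left tail (using that the support of $\nu$ is bounded away from $0$) and that $H$ has a $\Phi(q)$-subexponential left tail, then invoke the homogeneous uniqueness of Lemma~\ref{lemma}\ref{lemma:i} together with the normalization at $0$ to conclude $L=\Lo$ and $H=\Ho$. Your admissibility verification is more explicit than the paper's (you spell out joint continuity and the uniform limit $e^{-\Phi(q)x}\Ha(x)\to\La$ on the compact $\alpha$-range to control the denominator of $\omega$), but the logical skeleton is identical; the paper simply phrases the uniqueness step as $H=\tfrac{L}{\Lo}\Ho$ rather than forming your $F=\Lo H-L\Ho$, which is the same linear-algebraic manoeuvre.
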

\begin{proof}
The fact that $\nu$ has a bounded support ensures that $H$ is locally bounded. We know $\Ha=\La e^{\Phi(q)\cdot}+(e^{\alpha \cdot}\Ha)\star\Wq$ for each $\alpha\in (0,\infty)$. Integrating both sides against $\nu(d\alpha)$ we obtain via Tonelli's theorem (relevant measurabilities follow from the explicit form of the $\Ha$ given in Example~\ref{example:two}) $H=L e^{\Phi(q)\cdot}+(H\omega)\star \Wq$. At the same time, for $x\in (-\infty,0]$, $e^{-\Phi(q)x}H(x)=\int \Ha(x)e^{-\Phi(q)x}\nu(d\alpha)\leq 1$ so $H: \mathbb{R}\to (0,\infty)$ has a left tail that is $\Phi(q)$-subexponential. Next, because the support of $\nu$ is bounded from below away from zero, $\omega$ has a subexponential left tail. By Lemma~\ref{lemma} we obtain $H=\frac{L}{\Lo}\Ho$. But we also have $H(1)=1=\Ho(1)$, hence $L=\Lo$, and the proof is complete.
\end{proof}
Another fairly general class of $\omega$ that can be handled with some success is considered in

\begin{remark}
Let $P$ be a real polynomial, $\alpha\in (0,\infty)$ and $c\in \mathbb{R}$. Suppose $\omega(x)=P(x)e^{\alpha x}$ for all $x\in (-\infty,c]$. Then the recursion of Theorem~\ref{theorem}\ref{conv:ii} can, on $(-\infty,c]$, be successively computed in essentially closed form: one obtains algebraic expressions involving only $\psi$ and its higher-order derivatives. This is because, together with the Laplace transform of $\Wq$ \eqref{eq:laplace} that is given in terms of $\psi$, one obtains, by successive differentiation, also expressions for its higher order derivatives. In a similar vein, if $c<0$ and $\omega(x)=P(1/x)e^{\alpha x}$ for $x\in (-\infty,c]$, then one gets iterated integrals involving $\psi$ (cf. Example~\ref{example:new}). 
\end{remark}
Finally, when given a concrete $\Wq$, it may of course very well happen that for a specific form of $\omega$, the convolution equation of Theorem~\ref{theorem}\ref{conv:ii} admits an explicit solution (even as it fails to do so for a general $\Wq$). A flavor of this is given in the next section.

\section{Application to a model for the price of a financial asset}\label{illustration}
Assume $\omega>0$. We consider the process $S$ defined by $$S_t:=\exp(Y_t)\mathbbm{1}_{\{t<\zeta\}},\quad t\in [0,\infty),$$ as a model for the price of a (speculative) financial asset (here $Y$ is the process from \eqref{eq:Y}). When $\omega= 1$, then $\zeta=e_q$, $X=Y$ on $[0,e_q)$, and $S$ is nothing but the classical exponential L\'evy model for the price of a risky asset (defaulted at $\zeta$), see \cite{tankov} for a recent review. The idea with allowing a more general $\omega$ is that the asset price may  ``move faster or slower along its trajectory'', depending on the price level, destroying the stationary independent increments property of the log-returns, but preserving their Markovian character. 

Using time changed L\'evy processes to model financial assets is of course not new, see e.g. \cite{carr,kingler}. We set $\QQ_z:=\PP_{\log z}$, $z\in (0,\infty)$, for convenience. 

Suppose then in this setting that we are interested in the simple problem of the determination of the optimal level $b$ at which to sell the asset, having bought it at the level $z\in (0,\infty)$, under an inflation/impatience rate $\gamma\in [0,\infty)$. In other words, if we let $R_b^+:=\inf\{s\in (0,\infty):S_s>b\}$ denote the first hitting time of the set $(b,\infty)$ by the process $S$, then we would like a solution to the problem 
\begin{equation}\label{eq:apps}
\max_{b\in [z,\infty)}\AA(b)\text{ with } \AA(b):=\QQ_z[S_{R_b^+}\exp\{-\gamma R_b^+ \};R_b^+<\infty]\text{ for } b\in [z,\infty).
\end{equation}
(More generally we may simply be interested in $\AA(b)$ itself, if we are predetermined to sell at the level $b\in [z,\infty)$.) But, for $b\in [z,\infty)$, since $S_{R_b^+}=b$ on $\{R_b^+<\infty\}$,\footnotesize $$\AA(b)=b\QQ_z[\exp\{-\gamma R_b^+ \};R_b^+<\infty]=b\PP_{\log z}[\exp\{-\gamma T_{\log b}^+ \};T_{\log b}^+<\zeta]=b\mathcal{B}^{(\gamma\omega)}_q(\log z,\log b)=b\frac{\Hog(\log z)}{\Hog(\log b)}.$$ \normalsize Hence \eqref{eq:apps} is intimately related to the determination of the quantity \eqref{what-we-are-after}.  

Now  if $\omega= 1$, then, of course, because of the martingale property of $(\exp\{X_t-\psi(1)t\})_{t\in [0,\infty)}$ and the optional sampling theorem, $\AA(b)$ is monotone in $b\in [z,\infty)$, and the problem is trivial. However, for a general $\omega$ this is no longer the case, as we will see on an example shortly.

Let indeed $\alpha\in (0,\infty)$ and $\omega=e^{\alpha \cdot}\land 1$: as a possible rationale for such a choice, one may imagine that the asset moves faster along its trajectory at smaller price levels, reflecting that the investors are then more jittery, increasing the velocity of the trades. In this case  we can be quite explicit about the nature of $\Hog=:H$, as follows. Denote $L:=\Log$, and by $P$ the Patie scale function \eqref{eq:pssMp-exit} of Example~\ref{example:two}, so that $L=(\sum_{k=0}^\infty a_k\gamma^k)^{-1}$ and 

$$
P(x)= Le^{\Phi(q)x}\sum_{k=0}^\infty a_k(\gamma e^{\alpha x})^k,\quad x\in \mathbb{R},$$ where we have set $a_k:=\left(\prod_{l=1}^k(\psi(\Phi(q)+l\alpha)-q)\right)^{-1}$ for $k\in\mathbb{N}_0$. Then $H=P$ on $(-\infty,0]$,  while for $x\in [0,\infty)$,
\begin{equation}\label{eq:apply}
H(x)=h(x)+\gamma\int_0^xH(y)\Wq(x-y)dy,
\end{equation}
where 
\begin{equation}\label{eq:h}
h(x):=Le^{\Phi(q)x}+\gamma\int_{-\infty}^0e^{\alpha y}P(y)\Wq(x-y)dy, \quad x\in [0,\infty).
\end{equation}
As in the proof of Lemma~\ref{lemma} one sees that, on $[0, \infty)$, $H={\uparrow\!\!\!\text{--}\!\lim}_{n\to\infty}H_n$ where $H_0:=h$ and then recursively $H_{n+1}=h+\gamma  H_n\star \Wq$ for $n\in \mathbb{N}_0$, the latter convolution being now on $[0,\infty)$. Taking Laplace transforms on $[0,\infty)$ in \eqref{eq:apply} (denoting them by a hat) and using \eqref{eq:laplace}, it is also true that $\hat{H}(s)=\hat{h}(s)+\gamma \hat{H}(s)/(\psi(s)-q)$ and hence 
\begin{equation}\label{eq:laplace-appl}
\hat{H}=\hat{h}\frac{\psi-q}{\psi-\gamma-q}\text{ on } (\Phi(q+\gamma),\infty)
\end{equation}
(using Theorem~\ref{theorem}\ref{III} and the known solution for $\omega= 1$ it is easily checked that $\hat{H}(s)<\infty$ for $s\in (\Phi(q+\gamma),\infty)$).

Now  if $q+\gamma< \psi(1)$, then a comparison argument (with $\omega= 1$) shows that $\AA(b)\to \infty$ as $b\to\infty$. However, in general, it does not seem obvious how to determine an optimal $b$ analytically, nor  is it our intent to pursue this further here; we content ourselves by demonstrating how $\AA(b)$ may fail to be monotone in $b$. This transpires already in the (presumably simplest) case when $\psi(s)=s^2$, $s\in [0,\infty)$, corresponding to $X$ being a multiple (by the factor $\sqrt{2}$) of Brownian motion, and $q=0$, $\alpha=z=1$. Under the latter specifications, $\Phi(0)=0$ and $\psi(1)=1$, while $W(x)=x$ and (via \eqref{eq:h}) $h(x)=1+b_\gamma x$,  $x\in [0,\infty)$, where $b_\gamma:=\gamma \sum_{k=0}^\infty\frac{\gamma^k}{k!^2(k+1)}/\sum_{k=0}^\infty\frac{\gamma^k}{k!^2}$. Inverting the Laplace transform \eqref{eq:laplace-appl} for $H$ we obtain $$H(x)=\frac{\sqrt{\gamma}-b_\gamma}{2\sqrt{\gamma}}e^{-\sqrt{\gamma}x}+\frac{\sqrt{\gamma}+b_\gamma}{2\sqrt{\gamma}}e^{\sqrt{\gamma}x},\quad x\in [0,\infty) ;$$ see Figure~\ref{fig}. 

\begin{figure}[h!]
\begin{center}
 \includegraphics[scale=.55]{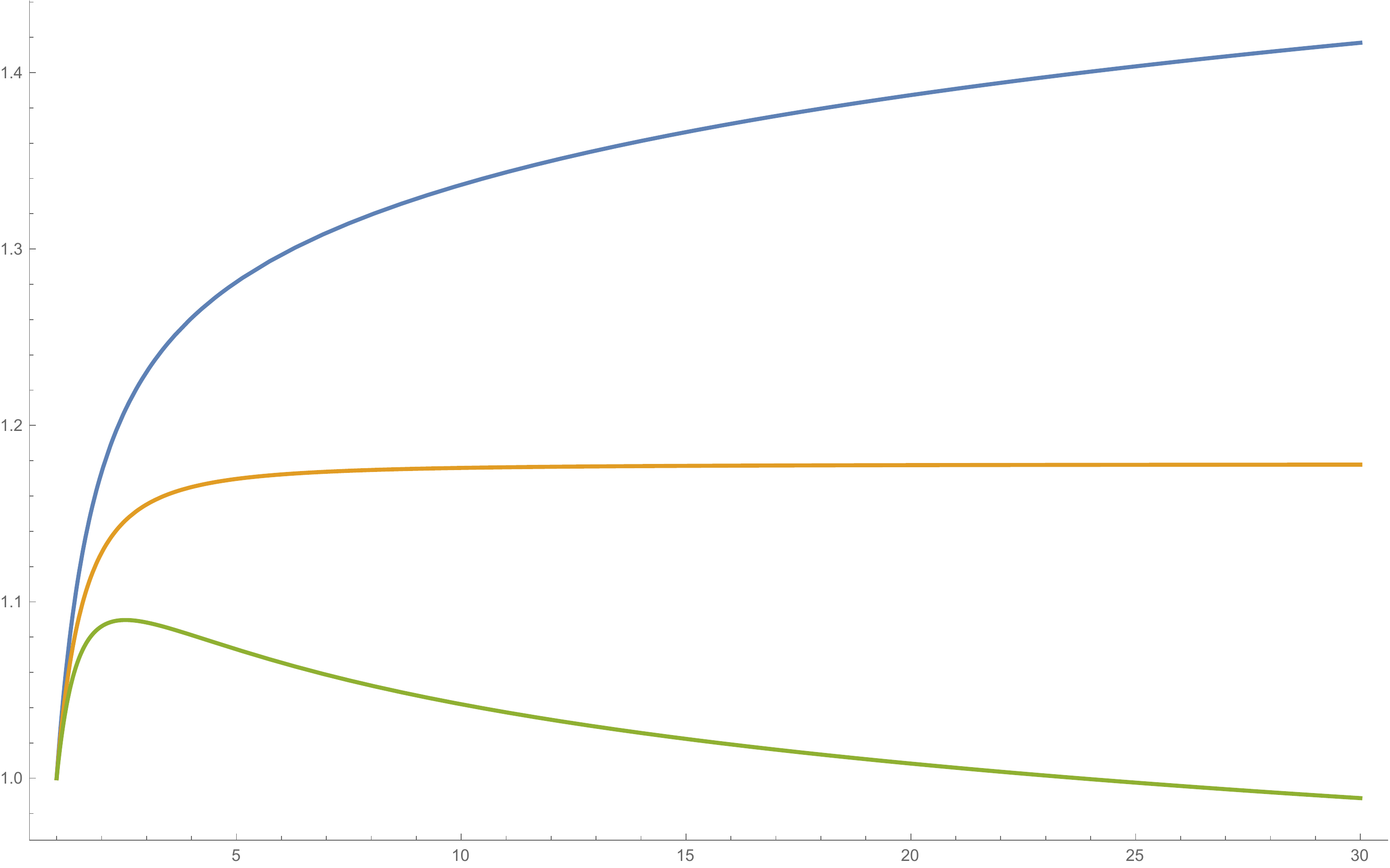}
\caption{The function $\AA$ of \eqref{eq:apps} on the interval $[1,30]$ in the case when $q=0$, $\alpha=z=1$ and $\psi(s)=s^2$, $s\in [0,\infty)$, for three values of the inflation/impatiance parameter $\gamma$, top-to-bottom: $\gamma=0.9$ (blue), $\gamma=1$ (orange) and $\gamma=1.1$ (green). The case $\gamma=1.1$ exhibits non-trivial behavior. Unlike with $\omega= 1$ when the optimal $b$ would be $1$, now the optimal $b$ is strictly greater than $1$. Intuitively this is due to the fact that the clock ``runs faster'' when the price level is small, thus ``buying'' us some time in terms of the inflation depreciation, as we wait for a higher price level.}\label{fig}
\end{center}
\end{figure}

\bibliographystyle{plain}
\bibliography{One-sided-exit-snlp-omega-killing}
\end{document}